\newtheorem{thm}{Theorem}
\newtheorem{Theorem}[thm]{Theorem}
\newtheorem{lemma}{Lemma}
\newtheorem{rmk1}{Remark}
\numberwithin{equation}{section}
\newcommand{\con}{\mathfrak{C}}
\newcommand{\bfrac}[2]{\left(\frac{#1}{#2}\right)}
\newcommand{\tRe}{\textup{Re }}
\newcommand{\G}{\mathcal G}
\newcommand{\T}{\mathcal T}
\newcommand{\sgn}{\textup{sgn}}
\title{Additive twists of fourier coefficients of $GL(3)$ Maass forms}
\author{Xiannan Li}
\date{} 
\begin{document}
\maketitle
\begin{abstract}
We prove cancellation in a sum of Fourier coefficents of a $GL(3)$ form $F$ twisted by additive characters, uniformly in the form $F$.  Previously, this type of result was available only when $F$ is a symmetric square lift.
\end{abstract}

\section{Introduction}
Substantial work has been done in studying sums involving coefficients attached to various $L$-functions.  A very classical example is the problem of estimating exponential sums of the form $\sum_{n\leq x} n^{it}$, which is related to subconvex bounds for the Riemann zeta function\footnote{We may view $n^{it}$ as the coefficients of the Dirichlet series for $\zeta(s-it)$ where $\zeta(s)$ denotes the Riemann zeta function.} and to Dirichlet's divisor problem.  For more on this, see, for instance, Chapters V and XII in \cite{Ti}.  A vast literature also exists for the estimation of character sums.  These are, among other things, related to subconvexity for Dirichlet $L$-functions and estimates for the least quadratic non-residue.  See, for instance, Chapter 12 of \cite{IK}.

The estimation of sums of coefficients twisted by additive characters is also classical.  To be specific, we shall be interested in sums of the type 
$$S = \sum_{n\leq N} a_n e(n \alpha)$$ where as usual, $e(x) = e^{2\pi i x}$.  Here $a_n$ may be the coefficients of certain $L$-functions, or more general coefficients of arithmetic interest.  This type of sum had already appeared in the work of Hardy and Littlewood \cite{HL} in 1914 and has been investigated extensively.  See also the work of Montgomery and Vaughan \cite{MV}.  

In the case of automorphic forms on $GL_2(\mathbb{R})$, obtaining cancellation in $S$ is well understood when the $a_n$ are either the normalized  Fourier coefficients of a modular form, or a Maass form on the upper half plane.  For instance, if $f(z) = \sum_{n} a(n)n^{\frac{k-1}{2}} e(nz)$ is a weight $k$ modular form, then it is not hard to prove that 
$$S\ll_f N^{1/2} \log N,$$ and this is essentially the truth, as can be seen from the $L^2$ norm of $S = S(\alpha)$ for $\alpha \in [0, 1]$.  (See Chapter 5 of \cite{Iw}.)  Note that while the bound depends implicitly on $f$, it is uniform in $\alpha$, which is useful for applications towards proving the same bound for the sum of such coefficients restricted to any arithmetic progression.  Moreover, the proof for this case is fairly straightforward, depending only on an estimate for the size of $f(z)$.

Results on such sums in higher rank settings are quite recent and exhibit new features.  Here, S. D. Miller \cite{Mi} proved the first result and showed that
$$\sum_{n\leq N} A(1, n) e(\alpha n) \ll_F N^{3/4+\epsilon},
$$where $A(m, n)$ are the Fourier coefficients of a cusp form $F$ on $GL(3, \mathbb{Z})\backslash GL(3, \mathbb{R})$, where the result is uniform in $\alpha$, but the implied constant depends on the form $F$.  In the same paper, he discusses the connection between such a bound and bounds on the second moment $\int_{-T}^T |L(1/2+it)|^2dt$ where where $L(s) = L(s, F)$ is the $L$-function attached to $F$.  The main tool used in this proof is Voronoi summation for $GL(3)$ developed by Miller and Schmid \cite{MS}.  

It is natural and sometimes desirable for applications to prove such a bound uniformly in $F$.  In this direction, Xiaoqing Li and M. Young \cite{LY} prove a result in the special case where $F$ is a symmetric square lift of a $SL(2, \mathbb{Z})$ Hecke-Maass form.  Their main result is
$$\sum_{n\leq N} A(1, n) e(\alpha n) \ll N^{3/4+\epsilon} \lambda_F(\Delta)^{D+\epsilon},
$$where $\lambda_F(\Delta)$ is the analytic conductor of $L(s, F)$ and $D = 1/4$ assuming Ramanujan and $D = 1/3$ unconditionally.  The proof is more intricate, depending on a careful technical analysis of exponential integrals which appear in Voronoi.  An interesting new phenomenon which occurs in their work is the localization of the dual sum in very short intervals.  It is for this reason that the Ramanujan conjecture becomes relevant.  

The authors of \cite{LY} restrict their attention to the symmetric square case as a compromise between generality and difficulty.  Symmetric square lifts are a thin subset of all $GL(3, \mathbb{Z})\backslash GL(3, \mathbb{R})$ cusp forms, so it would be interesting to extend this result to general Maass forms.  That is the focus of the present paper.

\begin{Theorem}\label{thm:main}
Let $F$ be a tempered cusp form on $GL(3, \mathbb{Z})\backslash GL(3, \mathbb{R})$ with Fourier coefficients $A(m, n)$, and Langlands parameters $\alpha_i$, $1\leq i\leq 3$.  Let $\con =\prod_{i=1}^3 (1+|\alpha_i|)$.  Then for any $\alpha \in \mathbb{R}$,
$$\sum_{n\leq N} A(1, n) e(n\alpha)\ll_\epsilon N^{3/4+\epsilon} \con^{D},
$$where we may take $D = 1/4$ assuming Ramanujan, and $D = 5/12$ unconditionally.
\end{Theorem}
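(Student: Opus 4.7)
My plan is to follow the strategy of Li and Young \cite{LY}, carrying out the Voronoi analysis directly for a generic tempered $GL(3)$ form, without relying on the simplifications of the gamma factors that are available in the symmetric square case.

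The first step is Dirichlet approximation: write $\alpha = a/q + \beta$ with $(a,q) = 1$, $q \leq Q$, and $|\beta| \leq 1/(qQ)$, where $Q$ is a parameter to be chosen later. After inserting a smooth cutoff $\psi(n/N)$ at no essential cost, I apply the $GL(3)$ Voronoi summation formula of Miller--Schmid \cite{MS} to
$$S_\psi = \sum_n A(1, n)\, e(na/q)\, e(n\beta)\, \psi(n/N),$$
producing a dual sum over $n_1 \mid q$ and $n_2 \geq 1$ involving the coefficients $A(n_2, n_1)$, Kloosterman sums $S(\bar a, \pm n_2; q/n_1)$, and an integral transform $\Psi_\pm(n_2 n_1^2/q^3)$ built from the test function $e(n\beta)\psi(n/N)$ and the gamma factors determined by the Langlands parameters $\alpha_i$.

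The heart of the matter is a uniform asymptotic analysis of $\Psi_\pm(x)$ that tracks $\con$ explicitly. Starting from the Mellin--Barnes representation and applying Stirling, I would rewrite $\Psi_\pm$ as an oscillatory integral whose phase combines a logarithmic term in $x$, contributions from the imaginary parts of the $\alpha_i$, and the linear phase from $\beta$. Stationary phase then shows that, up to a negligible error, $\Psi_\pm(x)$ is supported in a short window of $x$ whose location and width are controlled by $\con$ and $N$, localizing the dual sum to a short range of $n_2$. The same phenomenon drives \cite{LY}, but here all dependence on $\con$ must be read off from derivatives of general gamma factors, rather than extracted from the special structure of symmetric square $L$-functions.

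With the localization in hand, I would apply Cauchy--Schwarz in $n_2$ to remove the Kloosterman sums (bounded by Weil), and control the diagonal via the Rankin--Selberg estimate $\sum_{n \leq X} |A(m, n)|^2 \ll_\epsilon (mX)^\epsilon X$. Optimizing $Q$ yields the $N^{3/4+\epsilon}$ factor. The $\con$ dependence enters through the size of the dual window together with the size of $A(n_2, n_1)$ within it: pointwise Ramanujan gives $|A(n_2, n_1)| \ll (n_1 n_2)^\epsilon$ and hence the clean $D = 1/4$, whereas unconditionally one must absorb extra loss using the best known bound toward Ramanujan, producing $D = 5/12$.

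The main obstacle I anticipate is the uniform stationary phase analysis of $\Psi_\pm$, particularly through transitional regimes where the stationary point approaches the boundary of integration or where the Langlands parameters $\alpha_i$ are nearly coincident. Once uniform bounds on the phase and amplitude are established, the remaining Cauchy--Schwarz and Rankin--Selberg book-keeping is a direct extension of the argument in \cite{LY}.
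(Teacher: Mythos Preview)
Your high-level strategy matches the paper's: Dirichlet approximation, $GL(3)$ Voronoi, Stirling plus stationary-phase analysis of $\Psi_\pm$ with explicit $\con$-dependence, localization of the dual sum to short ranges, and Rankin--Selberg on the short sums. Two points of your description, however, misidentify where the key steps occur and would not yield the stated exponents if carried out as written.

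First, Cauchy--Schwarz is not used ``to remove the Kloosterman sums.'' The Kloosterman sums are bounded term-by-term by Weil, which together with the $q/(n_1 n_2)$ weight collapses the dual side to $q^{3/2+\epsilon}\sum_n |A(n,1)|\,|\Psi_k(n/q^3)|/n$ with absolute values throughout (this is Lemma~4.1 of \cite{LY}); Cauchy--Schwarz is only applied afterwards, to the short sums $\sum_{A\le n\le A+B}|A(n,1)|/n$ that arise from the localization. Second, the unconditional $D=5/12$ does \emph{not} come from any pointwise bound toward Ramanujan on $GL(3)$. It comes from Cauchy--Schwarz on the short sum together with the Rankin--Selberg convexity bound $\sum_{n\le X}|A(1,n)|^2\ll (X\con)^\epsilon X$, which gives savings $(B/A)^{1/2}$ unconditionally versus $(B/A)$ on Ramanujan; the $5/12$ is the numerical output of optimizing $Q=N^{1/2}\con^{-1/6}$ against the worst case of the ensuing bounds. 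Using current pointwise approximations to Ramanujan on $GL(3)$ would give a worse exponent.

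Finally, ``uniform stationary phase'' understates what is actually needed. The paper does not obtain a single uniform asymptotic for $\Psi_\pm$; it splits into regimes according to the size of $|\theta N|$ relative to $\con^{1/3}$ and $\con^{1/2}$, computes $f'(t)$ and $f''(t)$ explicitly using $\sum_i\alpha_i=0$, and in each regime applies first- or second-derivative tests together with a dyadic decomposition in $|C(t)|=\prod_i|t+a_i|$. The $\con$-dependence is extracted from this case analysis, not from a single saddle-point expansion; the ``transitional'' regimes you worry about are handled by the regime splitting rather than by any uniform asymptotic.
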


\begin{rmk1}
$\\$
\begin{enumerate}
\item The quality of the unconditional bound in our result is inferior to the unconditional bound in \cite{LY} due to the presence of functoriality results for $GL(2)$ which can be used for symmetric square lifts.  
\item Here, $\con$ is the usual analytic conductor for $L(1/2, F)$.  It is the same size as $\max (|\lambda_1|, |\lambda_2|)$, where the $\lambda_i$s are the eigenvalues of the Laplace-Casimir operators as defined in \S 6 of \cite{Go}.
\item As mentioned before, the work of Xiaoqing Li and Young \cite{LY} includes an analysis of very short sums in a range like $A\leq n\leq A+B$, where $\frac{B}{A}$ is small.  One of the differences in the general case is that sometimes this short sum behaviour disappears because $A$ can also be very small.  However, this is balanced out by the matching properties of functions appearing in the integral transform.
\end{enumerate}
\end{rmk1}

Rather than bound the sum $\sum_{n\leq N} A(1, n) e(n\alpha)$ directly, it will be more convenient to bound a smooth version of that sum.  

\begin{Theorem}\label{thm:smooth}
Perserve notation as in Theorem \ref{thm:main}.  Let $w$ be a smooth function with support in $[N, 2N]$ and such that $w^{(j)}(y) \ll_j N^{-j}$ for all $j\geq 0$.  Then
$$\sum_{n\geq 1} A(1, n) e(n\alpha) w(n) \ll_\epsilon N^{3/4+\epsilon} \con^{D},
$$where we may take $D = 1/4$ assuming Ramanujan, and $D = 5/12$ unconditionally.
\end{Theorem}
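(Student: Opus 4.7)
The plan is to apply the $GL(3)$ Voronoi summation formula of Miller--Schmid \cite{MS}, following the broad strategy of Miller \cite{Mi} and Li--Young \cite{LY}, while tracking uniformity in the spectral parameters of $F$ throughout. First, by Dirichlet's theorem, write $\alpha = a/q + \beta$ with $(a,q) = 1$, $q \leq Q$, and $|\beta| \leq 1/(qQ)$, where $Q$ is a parameter to be optimized at the end. Absorbing $e(n\beta)$ into the smooth weight yields a test function $W(n) = w(n) e(n\beta)$ to which one applies $GL(3)$ Voronoi with modulus $q$ and residue $a$. The resulting dual sum has the shape
$$q \sum_{\pm}\sum_{n_1 \mid q} \sum_{n_2 \geq 1} \frac{A(n_1, n_2)}{n_1 n_2}\, S(\bar a, \pm n_2;\, q/n_1)\, \Psi^{\pm}\!\left(\frac{n_1^2 n_2}{q^3}\right),$$
where $\Psi^{\pm}$ is a Mellin--Barnes transform of $W$ whose $\Gamma$-factor kernel depends on the Langlands parameters $\alpha_i$.

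The next step is an analysis of $\Psi^{\pm}(x)$ that is uniform in $\con$ and in $\beta N$. Applying Stirling to the $\Gamma$-factors expresses the Mellin--Barnes integral as an oscillatory integral whose phase is governed by $\con$, $\beta N$, and $x$. Stationary phase then shows that $\Psi^{\pm}$ is negligibly small outside a short window $n_2 \asymp X$ determined by the location of the stationary point, with a usable bound of order $X^{-1/2}$ (up to harmless factors) inside. This is the analogue of the transform analysis in \cite{LY}, but must be carried out without the simplifying factorization of the $\Gamma$-factor available for a symmetric square lift.

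Once the dual sum is localized, estimate the Kloosterman sums by Weil and bound the remaining smooth $n_2$-sum by Cauchy--Schwarz combined with the Rankin--Selberg average for $\sum_{n_2 \leq X}|A(n_1, n_2)|^2$. In the very short-sum regime flagged in Remark 1(3), one is forced to use pointwise estimates for $A(n_1, n_2)$: the Ramanujan bound $(n_1 n_2)^{\epsilon}$ yields $D = 1/4$, while the best known Kim--Sarnak-type bound for $GL(3)$ eigenvalues costs an additional power of $\con$ and inflates the exponent to $D = 5/12$. Finally, choose $Q$ to balance the Voronoi estimate against the trivial bound on the contribution of large denominators; this produces the desired $N^{3/4+\epsilon}$.

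The main obstacle is the uniform stationary-phase analysis of $\Psi^{\pm}$ for arbitrary tempered Langlands parameters $\alpha_1 + \alpha_2 + \alpha_3 = 0$. In the symmetric square case two parameters coincide in absolute value, which makes the critical point and the amplitude explicit. Here, three distinct $\Gamma$-factors contribute three scales to the phase, and the delicate regime in which the essential support $[X, X+Y]$ has $X$ itself small requires one to exploit matching cancellations between these factors rather than relying on short-sum concentration alone.
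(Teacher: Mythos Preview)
Your outline is essentially the paper's approach: Dirichlet approximation, $GL(3)$ Voronoi, Weil on the Kloosterman sums, and a uniform Stirling/stationary-phase analysis of $\Psi^{\pm}$ that localizes the dual sum to short ranges whose contribution is controlled by a short-sum estimate for $\sum_{A\le n\le A+B}|A(1,n)|/n$. Two small corrections. First, the unconditional exponent $D=5/12$ does \emph{not} come from a Kim--Sarnak-type pointwise bound; the paper's unconditional short-sum input is exactly the Cauchy--Schwarz plus Rankin--Selberg estimate you already mentioned, which gives $(B/A)^{1/2}$ rather than $(B/A)^1$, and it is this square-root loss, fed through the case analysis in $|\theta N|$ versus $\con^{1/3}$ and $\con^{1/2}$, that produces $5/12$. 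Second, the parameter $Q$ is not balanced against a ``trivial bound for large denominators'' but against other terms of the Voronoi bound itself: the analysis yields $Q^{3/2}\con^{1/2}+(N/Q)^{3/2}$ (plus, unconditionally, terms like $(Nq\con)^{1/2}$ and $Q^{3/2}\con^{2/3}$), and $Q=N^{1/2}\con^{-1/6}$ equalizes these.
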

Theorem \ref{thm:main} follows from Theorem \ref{thm:smooth} by standard methods (see \S 9 of \cite{LY}).  We now concentrate on proving Theorem \ref{thm:smooth}.

\section{The basic setup}
First write $\alpha = \frac{a}{q} + \frac{\theta}{2 \pi}$ where $(a, q) = 1$, $q\leq Q$ and $\theta \leq \frac{2\pi }{qQ}$, possible by Dirichlet's theorem on Diophantine approximation. \footnote{$Q$ is a parameter to be determined later.}  We then apply Voronoi summation to 
$$S = \sum_{n\leq N} A(1, n) e(\frac{an}{q}) \psi(n),
$$where $\psi(y) = e^{i\theta y} w(y)$.

The Voronoi summation formula for $GL(3)$ was first proven by Miller and Schmid \cite{MS}, and reproved by Goldfeld and Xiaoqing Li \cite{GL} using an alternate method.  We first introduce some notation.  Let
$$\tilde{\psi}(s) = \int_0^\infty \psi(x) x^s \frac{dx}{x},
$$and 
\begin{equation}\label{eqn:Psi}
\Psi_k(x) = \frac{1}{2\pi i} \int_{(\sigma)} (\pi^3 x)^{-s} \frac{\Gamma \bfrac{1+s+\alpha_1+k}{2}\Gamma \bfrac{1+s+\alpha_2+k}{2}\Gamma \bfrac{1+s+\alpha_3+k}{2}}{\Gamma \bfrac{-s-\alpha_1+k}{2}\Gamma \bfrac{-s-\alpha_2+k}{2}\Gamma \bfrac{-s-\alpha_3+k}{2}} \tilde{\psi}(-s) ds.
\end{equation}

Write $\bar{a}$ for the multiplicative inverse of $a$ modulo $q$.  Further define 
$$\Psi_{\pm}(x) = \frac{1}{2\pi^{3/2}} \left(\Psi_0(x)\pm \frac{1}{i} \Psi_1(x)\right).
$$
Then, by Voronoi summation \cite{MS}, the sum $S = \mathcal S_+ + \mathcal S_-$, where
$$\mathcal S_{\pm} = q \sum_{n_1|q} \sum_{n_2\geq 1} \frac{A(n_2, n_1)}{n_1n_2} S(\bar{a}, \pm n_2; q/n_1) \Psi_{\pm}\bfrac{n_2n_1^2}{q^3}.
$$

It is important to understand the dependence of the integral transforms $\Psi_k$ on the Langlands parameters $\alpha_i$ since this is where the dependence on the conductor arises.  This forms the bulk of the proof.  Before proceeding, we record a few basic results from \cite{LY}.  First, by Lemma 4.1 of \cite{LY},
$$S \ll q^{3/2+\epsilon} \max_{\pm} \max_{d|q} \max_{n_1|q/d} \sum_{n\geq 1} \frac{|A(n, 1)|}{n} \left|\Psi_{\pm}\bfrac{nn_1^2}{(q/d)^3}\right|.
$$The presence of the parameters $d$ and $n_1$ are unimportant to the actual analysis.  Without loss of generality, we will assume that $d=n_1=1$, which will simplify the cluttered notation; the other values of $d$ and $n_1$ can be bounded the same way.  This reduces the problem of bounding $S$ to bounding 
\begin{equation}\label{eqn:T}
\T_k = q^{3/2+\epsilon} \sum_{n\geq 1} \frac{|A(n, 1)|}{n} \left|\Psi_{k}\bfrac{n}{q^3}\right|.
\end{equation}

\subsection{A saddlepoint approximation}
Write $s = \sigma + i\tau$ so that $\tilde{\psi}(s) = x^{\sigma} I$, where
$$I = \int_0^\infty \omega(x)e^{i\theta x}x^{i\tau} \frac{dx}{x}.
$$If the integral is oscillatory, then the saddlepoint method may be applied to evaluate $I$. We quote Lemma 5.1 from \cite{LY} for this purpose.

\begin{lemma}\label{lem:integral}
With notation as above, if $|\tau|\geq 1$ and $|\theta N|  \geq 1$ then 
$$I = \sqrt{2\pi}\omega(-\tau/\theta)|\tau|^{-1/2} e^{i\tau \log \left|\frac{\tau}{e\theta}\right|} e^{i\frac{\pi}{4} \sgn(\theta)} + O(|\tau|^{-3/2}).
$$Further, if $|\tau|\geq |\theta N|^{1+\epsilon}$, then
$$I \ll_{A, \epsilon} |\tau|^{-A},
$$and if $|\tau| \leq |\theta N|^{1-\epsilon}$,then
$$I \ll_{A, \epsilon} |\theta N|^{-A}
$$
\end{lemma}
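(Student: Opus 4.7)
The plan is to recognize $I$ as the oscillatory integral
$$I=\int_N^{2N}\frac{\omega(x)}{x}\,e^{i\phi(x)}\,dx,\qquad \phi(x)=\theta x+\tau\log x,$$
and apply the method of stationary phase. Since $\phi'(x)=\theta+\tau/x$ and $\phi''(x)=-\tau/x^2$, the unique critical point is $x_0=-\tau/\theta$, which lies in the support $[N,2N]$ of $\omega$ precisely when $\theta$ and $\tau$ have opposite signs with $|\tau|\asymp|\theta|N$. This partitions the problem into the three regimes stated in the lemma.

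For the stationary regime I would change variables to $x=x_0(1+y)$. The identity $\theta x_0=-\tau$ causes the linear term in $y$ to vanish, leaving
$$\phi(x)=\phi(x_0)-\tfrac{\tau}{2}y^2+\tfrac{\tau}{3}y^3-\cdots,$$
so the effective large parameter is $|\tau|$. The leading Gaussian evaluates to
$$\sqrt{\tfrac{2\pi}{|\tau|}}\,\omega(x_0)\,e^{i\phi(x_0)+i(\pi/4)\sgn(-\tau)}.$$
Substituting $\phi(x_0)=-\tau+\tau\log|\tau/\theta|=\tau\log|\tau/(e\theta)|$ and noting $\sgn(-\tau)=\sgn(\theta)$ (since $\tau$ and $\theta$ have opposite signs) recovers exactly the asserted main term. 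The $O(|\tau|^{-3/2})$ remainder arises from the $y^3$ and higher corrections to the phase, together with a Taylor expansion of $\omega(x_0(1+y))$ about $y=0$: each additional power of $y$ integrated against a Gaussian of width $|\tau|^{-1/2}$ gains a further factor of $|\tau|^{-1/2}$, and the hypotheses $\omega^{(j)}\ll N^{-j}$ combined with $N\asymp|\tau/\theta|$ make all implied constants uniform.

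For the two non-stationary regimes I would use repeated integration by parts against $\phi$. When $|\tau|\geq(|\theta|N)^{1+\epsilon}$, the term $\tau/x$ dominates in $\phi'$ and $|\phi'|\gg|\tau|/N$ on the support of $\omega$; a direct computation shows $\tfrac{d}{dx}\bigl(\tfrac{\omega(x)}{x\phi'(x)}\bigr)\ll 1/(N|\tau|)$, so each iteration reduces the amplitude by $O(1/|\tau|)$ and $A$ iterations yield $|I|\ll|\tau|^{-A}$. When $|\tau|\leq(|\theta|N)^{1-\epsilon}$, the constant $\theta$ dominates in $\phi'$ and $|\phi'|\gg|\theta|$; the hypothesis is exactly what ensures $|\phi''|/|\phi'|^2\ll(|\theta N|)^{-1-\epsilon}$ stays dominated by $1/(N|\phi'|)\ll 1/(|\theta|N)$, so each iteration gains a factor of $1/|\theta N|$ and produces $|I|\ll(|\theta N|)^{-A}$.

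The main obstacle is uniformity. The classical stationary phase expansion (e.g.\ Ch.\ VIII of Stein's \emph{Harmonic Analysis}) is usually stated as an asymptotic in a single large parameter, while here the $O(|\tau|^{-3/2})$ error must hold with an absolute constant across all admissible $(\tau,\theta,N)$. This uniform version is precisely Lemma 5.1 of \cite{LY}, which one can simply invoke; proving it from scratch amounts to checking that the remainder in the Taylor expansion of $\phi$ around $x_0$ depends only on $|\phi^{(k)}(x_0)|\asymp|\tau|/N^k$ and on the derivative bounds on $\omega$, and then estimating the resulting error integrals term by term.
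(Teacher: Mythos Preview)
Your proposal is correct and matches the paper's treatment: the paper does not prove this lemma at all but simply quotes it as Lemma~5.1 of \cite{LY} and refers the reader there, and your stationary-phase sketch is exactly the standard argument underlying that result. Since you yourself note that one can ``simply invoke'' Lemma~5.1 of \cite{LY}, there is nothing to add.
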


\begin{rmk1}\label{rmk:integral}
Also, we note that if $|\tau |\leq 1$, then $I \ll_A |\theta N|^{-A}$ and if $|\theta N|\leq 1$, then $I\ll_A (1+|\tau|)^{-A}$.  
\end{rmk1}
We refer the reader to \cite{LY} for the proofs of the preceding statements.  

In further analysis of the exponential integral, we will see that sometimes the sum is localized to very short intervals.  We record the following easy Lemma for convenience.

\begin{lemma}\label{lem:shortsum}
Let $A \geq B >0$.  Then,
$$\sum_{A\leq n\leq A+B} \frac{|A(1, n)|}{n} \ll \bfrac{B}{A}^p A^\epsilon \con^\epsilon,
$$where we have $p = 1$ if the Ramanujan conjecture holds, and $p=1/2$ unconditionally.
\end{lemma}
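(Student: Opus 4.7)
The plan is to dispose of the two cases—assuming Ramanujan and unconditional—by separate elementary arguments, since the lemma makes no use of any oscillation; it is a pure majorant estimate.

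For the Ramanujan case ($p=1$), the idea is simply a termwise bound. Under Ramanujan, the divisor-type bound $|A(1,n)|\ll_\epsilon n^\epsilon$ holds independently of $\con$. Each term in the sum is then $\ll A^{\epsilon-1}$, and the interval $[A,A+B]$ contains $O(B)$ integers (since $B>0$), yielding
$$\sum_{A\leq n\leq A+B}\frac{|A(1,n)|}{n}\ll B\cdot A^{\epsilon-1}=\frac{B}{A}\,A^\epsilon,$$
which is the required bound (with no $\con$ dependence needed).

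For the unconditional case ($p=1/2$), my plan is to bound the denominator by $A$ and then use Cauchy--Schwarz on the numerator, reducing matters to a Rankin--Selberg mean square:
$$\sum_{A\leq n\leq A+B}\frac{|A(1,n)|}{n}\leq\frac{1}{A}\sum_{A\leq n\leq A+B}|A(1,n)|\leq\frac{B^{1/2}}{A}\Biggl(\sum_{A\leq n\leq A+B}|A(1,n)|^2\Biggr)^{1/2}.$$
I then invoke the standard bound $\sum_{n\leq X}|A(1,n)|^2\ll X^{1+\epsilon}\con^\epsilon$, which follows from the analytic properties of the Rankin--Selberg $L$-function $L(s,F\times\overline F)$ together with a convexity estimate for its residue at $s=1$ in terms of the archimedean data. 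Substituting with $X=A+B\leq 2A$ gives
$$\frac{B^{1/2}}{A}\cdot A^{1/2+\epsilon}\con^\epsilon=\Bigl(\frac{B}{A}\Bigr)^{1/2}A^\epsilon\con^\epsilon,$$
as claimed.

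The only non-trivial point is the $\con^\epsilon$ dependence in the Rankin--Selberg second moment, which is the step I expect to cite rather than reprove: it is a consequence of a Perron-type contour shift for $L(s,F\times\overline F)$ together with standard upper bounds on its residue, and can be extracted from the literature (e.g.\ the archimedean analysis in \cite{LY} or Goldfeld \cite{Go}). Beyond that the argument is essentially combinatorial, and no further input from the Voronoi transform or the saddle-point analysis is needed for this lemma.
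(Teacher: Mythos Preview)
Your proof is correct and follows essentially the same approach as the paper: a termwise divisor bound under Ramanujan, and Cauchy--Schwarz together with the Rankin--Selberg mean-square estimate unconditionally. The only cosmetic differences are that the paper keeps the weight $1/n$ inside both Cauchy--Schwarz factors (bounding $\sum |A(1,n)|^2/n$ and $\sum 1/n$ separately) and cites Brumley \cite{Br} and \cite{Li} specifically for the needed convexity bound on $L(s,F\times\tilde F)$.
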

\begin{proof}
If Ramanujan holds, then 
$$\sum_{A\leq n\leq A+B} \frac{|A(1, n)|}{n} \ll A^\epsilon \log\bfrac{A+B}{A},
$$from which the conclusion follows.  Otherwise by Cauchy's inequality,
\begin{eqnarray*}
\sum_{A\leq n\leq A+B} \frac{|A(1, n)|}{n} 
\leq \left(\sum_{A\leq n\leq A+B} \frac{|A(1, n)|^2}{n} \right)^{1/2}\left(\sum_{A\leq n\leq A+B} \frac{1}{n}\right)^{1/2}\\
\ll A^\epsilon \con^\epsilon \log\bfrac{A+B}{A}^{1/2},
\end{eqnarray*}from which the claim follows.  Here we have used that 
$$\sum_{A\leq n\leq A+B} \frac{|A(1, n)|^2}{n} \ll A^\epsilon \con^\epsilon,$$ which follows by the convexity bound for Rankin-Selberg L-functions $L(s, F\times\tilde{F})$.  Brumley \cite{Br} proved this convexity bound for $L(s,F)$ automorphic for $GL(n)$ for $n\leq 4$ using recent progress in functoriality and the author \cite{Li} proved this for all $n$ by a different method.
\end{proof}

\subsection{Preliminary cleaning}
Let 
$$G(s) = \frac{\Gamma\bfrac{s+k}{2}}{\Gamma\bfrac{1-s+k}{2}}.
$$The $\Gamma$ factors which appear in the integral transform $\Psi_k$ is $\G(1+s)$ where
$$\G(s) = G(s+\alpha_1)G(s+\alpha_2)G(s+\alpha_3)
$$and the Langlands parameters $\alpha_i$ satisfy $\alpha_1 + \alpha_2 + \alpha_3 = 0$, and $\tRe \alpha_1 = \tRe \alpha_2 = \tRe \alpha_3 = 0$ by temperedness.  Thus set $\alpha_j = i a_j$ for $a_j \in \mathbb{R}$.  

Then, for $\sigma > -1$, Stirling's approximation gives 
$$G(1+\sigma + it) \ll (1+|t|)^{\sigma + 1/2}
$$so that
$$\G(1+\sigma + it) \ll \big((1+|t+a_1|)(1+|t+a_2|)(1+|t+a_3|)\big)^{\sigma + 1/2}.
$$Recalling that
$$\con = (1+|\alpha|)(1+|\beta|)(1+|\gamma|),
$$we have
$$\G(1+\sigma +it) \ll \big( \con(|t|+1)+ |t|^3\big)^{\sigma + 1/2}.
$$
By Lemma \ref{lem:integral} and Remark \ref{rmk:integral}, $\tilde{\psi}(x) \ll_A  (xN)^{-\sigma} \left(1+\frac{|t|}{1+|\theta N|^{1+\epsilon}}\right)^{-A}$.  Thus, for $\sigma > -1$,
\begin{eqnarray}\label{eqn:Psibound}
\Psi_k(x) 
&\ll_{\sigma, A}& \int_{-\infty}^\infty (xN)^{-\sigma} \left(1+\frac{|t|}{1+|\theta N|^{1+\epsilon}}\right)^{-A} \big( \con(|t|+1)+ |t|^3\big)^{\sigma + 1/2} \notag \\
&\ll& (xN)^{-\sigma} \big( \con(|\theta N|+1)+ |\theta N|^3\big)^{\sigma + 1/2} (1+|\theta N|^{\epsilon}).
\end{eqnarray}We first record the following results on $\Psi_k(x)$.

\begin{lemma}\label{lem:prelimPsi}
Let $U = \con(|\theta N|+1)+ |\theta N|^3$.  
\begin{enumerate}
\item If $xN \geq N^{\epsilon} U$, then $\Psi_k(x) \ll_A N^{-A}$ for any $A>0$.
\item If $|\theta N| \ll \con^\epsilon$, then $\Psi_k(x) \ll \con^{1/2+\epsilon}$.  
\item Let $R_2 = \{ t\in \mathbb{R}: |t+a_i|\geq \con^\epsilon \textup{ for all } 1\leq i\leq 3\}$.  Further, let 
\begin{align*}
f(t) &= t\log \bfrac{\pi^3 x |t|}{e|\theta|} - (t+a_1) \log \bfrac{|t+a_1|}{2e} \\
&-(t+a_2) \log \bfrac{|t+a_2|}{2e} -(t+a_3) \log \bfrac{|t+a_3|}{2e}.
\end{align*}
If $|\theta N| \geq \con^\epsilon$, then there exists a smooth function $g(t)$ with support when $|t| \asymp |\theta N|$ satisfying $\frac{d^j}{dt^j} g(t) \ll_j |t|^{-1/2-j}$ such that
$$\Psi_k(x)  \ll \sqrt{xN} \int_{R_2}g(t) e^{if(t)}dt + \sqrt{xN}|\theta N|^{-1/2+\epsilon} \con^\epsilon.
$$

\end{enumerate}
\end{lemma}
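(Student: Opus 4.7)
Parts (1) and (2) follow directly from the crude bound \eqref{eqn:Psibound}. For (1), rewrite that bound as $\Psi_k(x) \ll U^{1/2}(1+|\theta N|^\epsilon)(U/(xN))^\sigma$; under the hypothesis $xN \geq N^\epsilon U$, the factor $(U/(xN))^\sigma \leq N^{-\epsilon\sigma}$ can be made arbitrarily small by taking $\sigma$ large, since $U$ grows only polynomially in $N$ and $\con$. For (2), take $\sigma = 0$; then $\Psi_k(x) \ll U^{1/2}(1+|\theta N|^\epsilon)$, and the hypothesis $|\theta N| \ll \con^\epsilon$ forces $U \ll \con^{1+\epsilon}$, which gives the stated bound.

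Part (3) is the substantive case. My plan is to combine the saddle-point expansion of $\tilde{\psi}$ with Stirling's approximation for the Gamma factors, then split by the partition $\{R_2, R_2^c\}$. First shift the contour in \eqref{eqn:Psi} to $\sigma = 0$; this is permissible because by temperedness ($\tRe \alpha_j = 0$) the numerator Gamma poles all lie at $\tRe s \leq -1$, and Stirling ensures rapid decay in $|\tIm s|$. Next, apply Lemma \ref{lem:integral} with $\tau = -t$: since $|\theta N| \geq \con^\epsilon \geq 1$, the factor $\tilde{\psi}(-it)$ is negligibly small outside $|t| \in [|\theta N|, 2|\theta N|]$ and on that range takes the oscillatory form $\sqrt{2\pi}\, w(t/\theta)|t|^{-1/2} e^{-it \log|t/(e\theta)|} e^{i\pi \sgn(\theta)/4} + O(|t|^{-3/2})$. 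Insert a smooth partition of unity compatible with $\{R_2, R_2^c\}$ and split the $t$-integration accordingly.

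On the $R_2$-piece, since $|t+a_j|\geq \con^\epsilon$ for each $j$, Stirling's formula applies uniformly to each factor $G(1+it+ia_j)$ and contributes a non-oscillatory modulus $\asymp |t+a_j|^{1/2}$ and an oscillating phase $(t+a_j)\log|(t+a_j)/(2e)|$, plus subleading corrections that lose a factor of $|t+a_j|^{-1}$ per differentiation. Multiplying these three Stirling expansions against the saddle-point form of $\tilde{\psi}(-it)$ and the factor $(\pi^3 x)^{-it}$, a direct computation identifies the combined phase as $-f(t)$ in the notation of the lemma and extracts a combined smooth amplitude which factors as $\sqrt{xN}\,g(t)$; here $\sqrt{xN}$ arises as the natural amplitude scale at the critical point of $f(t)$, where $\prod_j|t+a_j| \asymp x|t|/|\theta|$. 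The derivative bound $d^j g/dt^j \ll_j |t|^{-1/2-j}$ follows by differentiating the Stirling expansion term by term and using $|t+a_j|, |t| \geq \con^\epsilon$. On the $R_2^c$-piece, which is a union of at most three intervals of total measure $O(\con^\epsilon)$, bound the integrand trivially: combine $|\tilde{\psi}(-it)| \ll |\theta N|^{-1/2}$ with $|\G(1+it)|$ estimated by pairing Stirling on the two factors where $|t+a_j|\geq \con^\epsilon$ against the trivial bound $|G(1+it+ia_j)| \ll \con^{\epsilon/2}$ on the small factor; a short case analysis produces the stated error $\sqrt{xN}|\theta N|^{-1/2+\epsilon}\con^\epsilon$.

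I expect the main obstacle to be the detailed Stirling bookkeeping: verifying that the phases from the three Gamma ratios, the saddle point of $\tilde{\psi}(-it)$, and the twist $(\pi^3 x)^{-it}$ combine precisely to $-f(t)$, and that the residual smooth amplitude factors cleanly as $\sqrt{xN}\,g(t)$ with the claimed derivative estimates uniformly on $R_2 \cap \{|t|\asymp|\theta N|\}$. A secondary subtlety is arranging the $R_2^c$ sub-case bounds to match the specific shape $\sqrt{xN}|\theta N|^{-1/2+\epsilon}\con^\epsilon$, since $\con$ and $|\theta N|$ may take any relative sizes.
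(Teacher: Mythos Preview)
Your treatment of parts (1) and (2) is correct and matches the paper's: both follow from the crude bound \eqref{eqn:Psibound}, the first by sending $\sigma \to \infty$ and the second by taking $\sigma = 0$.

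For part (3) your overall architecture---saddle-point expansion for $\tilde\psi$, Stirling for $\G$, split into $R_2$ and its complement---is the same as the paper's, but your choice of contour creates a real gap. You shift to $\sigma = 0$; on that line each factor $G(1+i(t+a_j))$ has modulus $\asymp |t+a_j|^{1/2}$, so the combined non-oscillatory amplitude is $|t|^{-1/2}\prod_j |t+a_j|^{1/2}$, with no $\sqrt{xN}$ present. Your claim that this ``factors as $\sqrt{xN}\,g(t)$'' rests on the relation $\prod_j |t+a_j| \asymp x|t|/|\theta|$, but that holds only at stationary points of $f$, not uniformly on $R_2\cap\{|t|\asymp|\theta N|\}$. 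If you set $g(t) = (xN)^{-1/2}|t|^{-1/2}\prod_j|t+a_j|^{1/2}$, then $g$ depends on $x$ and fails $g(t)\ll |t|^{-1/2}$ whenever $|C(t)|=\prod_j|t+a_j|$ exceeds $xN$; this is the generic situation, since $xN$ ranges over all of $[1,UN^\epsilon]$ while $|C(t)|$ can be as large as $U$. The derivative bounds fail for the same reason, since $\tfrac{d}{dt}|C(t)|^{1/2}$ brings in factors $|t+a_j|^{-1}$ that are only $\gg \con^{-\epsilon}$ on $R_2$, not $\gg |t|^{-1}$.

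The paper's fix is to shift to $\sigma = -\tfrac12$ instead. On that line $|G(\tfrac12+i\tau)|\asymp 1$, so $\G$ contributes only a phase; the factor $\sqrt{xN}$ then comes cleanly from $(\pi^3 x)^{-s}\big|_{\sigma=-1/2}=(\pi^3 x)^{1/2}(\pi^3 x)^{-it}$ together with $\tilde\psi(-s)\asymp N^{1/2}|t|^{-1/2}e^{i(\cdots)}$. With this choice $g(t)$ is essentially $\omega(-t/\theta)|t|^{-1/2}$ times a constant, independent of $x$, and the bounds $g^{(j)}\ll |t|^{-1/2-j}$ are immediate. The $R_2^c$ estimate and the $O(|t|^{-3/2})$ saddle-point error also reduce directly to $\sqrt{xN}|\theta N|^{-1/2+\epsilon}\con^\epsilon$ on this line. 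Once you move the contour to $\sigma=-\tfrac12$, the rest of your outline goes through as written.
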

\begin{proof}
If $xN \geq N^{\epsilon} U$, then shift contours to the right to see that the integral is $\ll_A N^{-A}$ for any $A>0$.  Now, if $|\theta N|\ll \con^{\epsilon}$, the desired bound follows from (\ref{eqn:Psibound}) upon setting $\sigma = 0$.  

Hence assume that $|\theta N| \geq \con^{\epsilon}$.  We restrict our attention to the range $|\theta N|^{1-\epsilon} \leq |t| \leq |\theta N|^{1+\epsilon}$, since otherwise, $\tilde{\psi}(s)$ is very small by Lemma \ref{lem:integral}.  Set $\sigma = -1/2$.  Then by Lemma \ref{lem:integral}, we have that
$$\tilde{\psi}(x) = \sqrt{2\pi xN} \omega\bfrac{-t}{\theta}|t|^{-1/2} e^{it \log \left|\frac{t}{e\theta}\right|} e^{i \frac{\pi}{4} \sgn(\theta)}+O(|t|^{-3/2}).
$$The contribution of $O(|t|^{-3/2})$ to the integral $\Phi_k(x)$ is
$$\ll \sqrt{xN} \int_{|\theta N|^{1-\epsilon}\leq |t| \leq |\theta N|^{1+\epsilon}} |t|^{-3/2} dt \ll \sqrt{xN}|\theta N|^{-1/2+\epsilon}.
$$We now seek to understand the contribution from the main term, which up to a constant factor is
\begin{equation}\label{eqn:applysaddlepsi}
\sqrt{xN} \int_{-\infty}^\infty (x\pi^3)^{it} \omega \bfrac{-t}{\theta} |t|^{-1/2} e^{it \log \left|\frac{t}{e\theta}\right|} \G(1+\sigma-it) dt
\end{equation}
Stirling's approximation gives that us that
$$G(1/2 - it) = e^{-it \log |t/2e|} \left(c_0 + \frac{c_1}{|t|}+...+O\bfrac{1}{|t|^A}\right),
$$where the $c_i$ are absolute constants.  We split the integral in (\ref{eqn:applysaddlepsi}) into two ranges $R_1$ and $R_2$, where $R_1 = \{t\in \mathbb{R}: |t+a_i| \geq \con^{\epsilon} \textup{ for some } 1\leq i\leq 3\} $ and $R_2$ is the complement of $R_1$.  The contribution of $R_1$ gives $\ll \sqrt{xN}|\theta N|^{-1/2} \con^{\epsilon}$.  For $R_2$, we use Stirling's approximation for $\G$ to get that (\ref{eqn:applysaddlepsi}) can be rewritten as
$$\sqrt{xN} \int_{R_2} g(t) e^{if(t)} dt,
$$as desired.
\end{proof}

\section{Proof of Theorem \ref{thm:smooth}}
We will be deriving various bounds for $\Psi_k(x)$ in this section, and it will be convenient to record the contributions these make to $\T_k$ below.  Note that $x = \frac{n}{q^3}$ and $U^\epsilon \ll (QN)^\epsilon$.  Since Theorem \ref{thm:smooth} is trivial otherwise, we also assume that $\con^\epsilon \ll N^\epsilon$.  
\begin{align} \label{eqn:sumbound}
&q^{3/2+\epsilon} \sum_{xN\leq UN^\epsilon} \frac{|A(n, 1)|}{n} \left(\con^{1/2} + \sqrt{U} |\theta N|^{-1/2} + |\theta N|^{3/2}\right) \\
&\ll (QN)^{\epsilon} \left(Q^{3/2} \con^{1/2} + q^{3/2} \left(\con^{1/2} \sqrt{|\theta N| + 1} + |\theta N|^{3/2} \right) |\theta N|^{-1/2} + q^{3/2} |\theta N|^{3/2} \right) \notag \\
&\ll (QN)^{\epsilon} \left( Q^{3/2} \con^{1/2} + \bfrac{N}{Q}^{3/2}\right). \notag 
\end{align}

In particular, we see that the contribution of the terms from parts (1) and (2) of Lemma \ref{lem:prelimPsi} and from the error term from part (3) of Lemma \ref{lem:prelimPsi} to $\T_k$ is bounded by the above.  Let 
$$J = \sqrt{xN} \int_{R_2} g(t) e^{if(t)} dt,
$$with notation as in Lemma \ref{lem:prelimPsi}.  In order to prove cancellation in this integral, our first step is to record some expressions for $f'(t)$ and $f''(t)$.  

Without loss of generality, assume that $|a_1|\geq |a_2| \geq |a_3|$. Note that $a_1 \asymp a_2$, so $\con^{1/3} \leq a_1 \leq \con^{1/2}$.  For future use, let
\begin{equation}\label{eqn:C}
C(t) := \prod_i (t+a_i) = t^3 - (a_1^2 - a_2a_3)t + a_1a_2a_3,
\end{equation}since $\sum_i a_i = 0$.  For $|\theta N|\asymp |t|$, we have that
\begin{equation}\label{eqn:boundC}
C(t) \ll (|\theta N|+1) (|\theta N|^2 + \con).
\end{equation}

Further, after some calculations,
\begin{equation}\label{eqn:fprime}
 f'(t) = \log \bfrac {8\pi^3 xN|t|}{|\theta N| \prod_i |t+a_i|} = \log \bfrac {8\pi^3 xN|t|}{|\theta N C(t)|}.
\end{equation}
Using the fact that $\sum_i a_i = 0$,
\begin{align}\label{eqn:f''}
f''(t) 
&= \frac{1}{t} - \sum_i \frac{1}{t+a_i}  \\
&= \frac{\prod_i (t+a_i) - t\left((t+a_1)(t+a_2)+(t+a_3)(t+a_1)+(t+a_2)(t+a_3)\right)}{t\prod_i(t+a_i)} \notag \\
&= \frac{\prod_i (t+a_i) - t\left(3t^2 + \sum_{i<j} a_i a_j \right)}{t\prod_i(t+a_i)} \notag \\
&= \frac{t^3 +t\left(\sum_{i<j} a_i a_j\right) + \prod_i a_i - 3t^3 - t\left(\sum_{i<j} a_i a_j \right)}{t\prod_i(t+a_i)} \notag \\
&= \frac{\prod_i a_i - 2t^3}{t\prod_i(t+a_i)} \notag \\
&= \frac{\prod_i a_i - 2t^3}{tC(t)}.\notag
\end{align} 

We now consider different ranges of $|\theta N|$.  We consider the case $|\theta N|\gg \con^{1/2-\epsilon}$ in \S 3.1, $\con^{1/3}\leq |\theta N| \leq \con^{1/2-\epsilon}$ in \S 3.2, and $\con^\epsilon < |\theta N| < \con^{1/3}$ in \S 3.3.

\subsection{$|\theta N| \gg \con^{1/2-\epsilon}$}
Here, since we may assume that $|t| \asymp |\theta N|$, we have $|t| \gg \con^{1/2 - \epsilon}$.  In this case, since $t^3 \gg \con^{3/2-\epsilon}$ and $\left|\prod_i a_i\right| \leq \con$,
$$f''(t) \asymp \frac{t^2}{C(t)} \gg \frac{t^2}{t^3}\con^{-\epsilon},
$$by (\ref{eqn:C}).  Thus $|f''(t)| \gg \frac{1}{|\theta N| \con^\epsilon}$ and by Lemma 5.1.3 of \cite{Hu},
$$\int_{\alpha}^{\beta} g(t) e^{if(t)}dt \ll \frac{1}{|\theta N|^{1/2}} \sqrt{|\theta N| \con^{\epsilon}} \ll |\theta N|^{\epsilon}.
$$Thus the contribution to $\T_k$ is bounded by 
\begin{eqnarray*}
&\ll& q^{3/2+\epsilon} \sum_{xN\leq UN^\epsilon} \frac{|a(n)|}{n} \sqrt{xN} |\theta N|^{\epsilon}\\
&\ll& q^{3/2+\epsilon} \sqrt{U}N^{\epsilon} \\
&\ll& q^{3/2+\epsilon} N^{\epsilon} |\theta N|^{3/2} \\
\end{eqnarray*}where we have used $|\theta N|\gg \con^{1/2-\epsilon}$ to see that $U \ll |\theta N|^3 \con^\epsilon\ll |\theta N|^3 N^\epsilon$.  The latter is bounded by (\ref{eqn:sumbound}).

\subsection{$\con^{1/3} \leq |\theta N| \leq \con^{1/2 -\epsilon}$}
Let 
$$\Delta = xN - \frac{|\theta N|\prod_i |t+a_i|}{8\pi^3 |t|}.
$$We may write
$$f'(t) = \log \left(1+\frac{8\pi^3|t|\Delta}{|\theta N| \prod_i |t+a_i|}\right).
$$Now, if $xN \not \asymp \frac{|\theta N| \prod_i |t+a_i|}{|t|}$, then we are done, then then $f'(t) \gg 1$, and $J\ll \frac{\sqrt{xN}}{|\theta N|^{1/2-\epsilon}}$ by Lemma 5.1.2 of \cite{Hu}.  Then the contribution to $\T_k$ is $\ll q^{3/2+\epsilon}\frac{\sqrt{U}}{|\theta N|^{1/2-\epsilon}}$, which is bounded as in (\ref{eqn:sumbound}).

Thus, assume that
$$xN \asymp \frac{|\theta N|\prod_i |t+a_i|}{|t|} \asymp |C(t)| \ll |\theta N|\con,
$$for some $t \asymp |\theta N|$, where we have used (\ref{eqn:boundC}).  Note that $f'(t) \asymp \frac{\Delta}{C(t)}$.  We proceed differently according to the size of $f'(t)$.

\subsubsection{$f'(t)$ is small}
Suppose that
$$\Delta \ll |\theta N|^3,
$$for some $t\asymp |\theta N|$.
In this case $\prod_i a_i - 2t^3 \asymp t^3$, since $t^3 \geq \con \geq \prod_i |a_i|$.  Then
$$f''(t) \asymp \frac{t^2}{C(t)}.
$$Now for $M\geq 1$, let $I_M = C^{-1}([M, 2M)\cup(-2M, -M])$ and 
$$J_M = \sqrt{xN} \int_{I_M} g(t) e^{if(t)}dt.$$   
Trivially, $I_M$ is always a union of 6 intervals or less.  From Lemma \ref{lem:prelimPsi}, by the definition of $R_2$, $C(t) \gg \con^\epsilon$ for $t\in R_2$ so we may assume that $M \gg \con^\epsilon$.  Fix $M$, and assume that $xN \asymp \frac{|\theta N|C(t)}{|t|} \asymp C(t) \asymp M$ for $t\in I_M$, since otherwise, we have that $f'(t) \gg 1$ and $J_M \ll \frac{1}{\sqrt{|\theta N|}}$ by Lemma 5.1.2 of \cite{Hu} as before.  In particular, we need only consider one value of $M$ in the sequel.  In this case $f''(t) \asymp \frac{|\theta N|^2}{M}$ so $J_M \ll \frac{\sqrt{xN}}{\sqrt{|\theta N|}}\frac{\sqrt{M}}{|\theta N|} = \frac{\sqrt{xNM}}{|\theta N|^{3/2}}$ by Lemma 5.1.3 in \cite{Hu}.

The contribution of this to $\T_k$ is bounded by
\begin{eqnarray*}
S_M &:=& q^{3/2+\epsilon}\sum_{A\leq n\leq A+B} \frac{|a(n)|}{n} \frac{\sqrt{xNM}}{|\theta N|^{3/2}}\\
&\ll& \frac{MN^\epsilon}{|\theta N|^{3/2}} q^{3/2+\epsilon} \bfrac{B}{A}^p,
\end{eqnarray*}by Lemma \ref{lem:shortsum}, where $p = 1/2$ unconditionally, and $p=1$ on Ramanujan.  Since $xN \asymp M$, $A \asymp \frac{q^3 M}{N}$, and $B \ll \frac{|\theta N|^3 q^3}{N}$.  For $p=1$, $S_M \ll \frac{MN^\epsilon}{|\theta N|^{3/2}} q^{3/2+\epsilon} \frac{|\theta N|^3}{M} = q^{3/2+\epsilon}N^\epsilon |\theta N|^{3/2+\epsilon}$, which is bounded by the right hand side of (\ref{eqn:sumbound}).  Unconditionally, when $p=1/2$, 
\begin{equation}\label{eqn:SMunconditionalbound1}
S_M \ll M^{1/2}q^{3/2+\epsilon} \ll N^\epsilon\sqrt{\con \theta N} q^{3/2+\epsilon} \leq Q^{\epsilon}N^\epsilon\sqrt{\con N Q}.
\end{equation}

\subsubsection{$f'(t)$ is large}  Here, suppose that $Y\leq \Delta < 2Y$ for some $Y \geq |\theta N|^3$.  Then 
$$|f'(t)| \asymp \frac{|t| \Delta}{|\theta N||C(t)|} \asymp \frac{Y}{|C(t)|}.
$$Again split the integral into $J_M$ as before.  $J_0$ is bounded exactly as above.  Note that $J_M \ll \frac{M}{Y\sqrt{|\theta N|}}$ by Lemma 5.1.2 of \cite{Hu}. Then for $A \asymp \frac{q^3}{N} M$ and $B \asymp \frac{q^3}{N} Y$, we have
\begin{eqnarray*}
S_M &\ll& q^{3/2+\epsilon} \sum_{A\leq n\leq A+B} \frac{|a(n)|}{n} \sqrt{M}  \frac{M}{Y\sqrt{|\theta N|}} \\
&\ll& N^\epsilon q^{3/2+\epsilon} \frac{M^{3/2}}{Y\sqrt{|\theta N|}} \bfrac{Y}{M}^p,
\end{eqnarray*}
by Lemma \ref{lem:shortsum}.  Assuming Ramanujan, we have $p=1$.  Since $M \leq \con |\theta N|$, $S_M\ll q^{3/2+\epsilon} \frac{M^{1/2}}{\sqrt{|\theta N|}} \ll q^{3/2+\epsilon} \sqrt{\con}$ which is bounded by (\ref{eqn:sumbound}).

Unconditionally we have $p=1/2$.  Using that $Y\geq |\theta N|^3$, $|\theta N| \geq \con^{1/3}$, we have
\begin{align}\label{eqn:SMunconditionalbound2}
S_M &\ll q^{3/2+\epsilon}N^\epsilon \frac{M^{3/2}}{Y\sqrt{|\theta N|}}\bfrac{Y}{M}^{1/2} \\
&\ll N^\epsilon\frac{q^{3/2}\con}{|\theta N|} \leq N^\epsilon q^{3/2}\con^{2/3}.\notag
\end{align}

\subsection{$\con^\epsilon < |\theta N| < \con^{1/3}$}  

If $xN \not \asymp \frac{|\theta N|C(t)}{t} \asymp C(t)$ for $t\asymp |\theta N|$, then we are done as before since then $f'(t) \gg 1$.  Hence assume that $xN \asymp C(t)$ for some $t \asymp |\theta N|$.  Define $J_M$ as in the last section.  If $M \not \asymp xN$, we are similarly done, so assume that $C(t) \asymp M \asymp xN$.  We split into two cases.

\subsubsection{$\Delta \ll \frac{C(t)}{|\theta N|}$}  The trivial bound gives $J_M \ll \sqrt{xN} |\theta N|^{1/2+\epsilon}$, which contributes  
\begin{eqnarray*}
&\ll& q^{3/2+\epsilon} \sum_{A\leq n\leq A+B} \frac{|a(n)|}{n} \sqrt{M}|\theta N|^{1/2+\epsilon}\\
&\ll& N^\epsilon q^{3/2+\epsilon}\sqrt{M}|\theta N|^{1/2+\epsilon} \bfrac{B}{A}^p,
\end{eqnarray*}by Lemma \ref{lem:shortsum} where $A \asymp C(t) \frac{q^3}{N}$ and $B \ll \frac{C(t)}{|\theta N|}\frac{q^3}{N}$.  Say that $p=1$.  Using $M \ll |\theta N| \con$, this leads to $S \ll N^\epsilon \con^{1/2} q^{3/2+\epsilon} |\theta N|^{\epsilon}$ which is bounded by (\ref{eqn:sumbound}).

In the unconditional case, $p=1/2$, so we have
\begin{equation} \label{eqn:SMunconditionalbound3}
q^{3/2+\epsilon}\sqrt{M} \ll N^\epsilon Q^{\epsilon}\sqrt{Nq\con}.
\end{equation}

\subsubsection{$\Delta \gg \frac{C(t)}{|\theta N|}$}  Here we again split the range for $\Delta$ into diadic intervals.  Let $Y<\Delta \leq 2Y$.  We have that $f'(t) \asymp \frac{\Delta}{C(t)} \asymp \frac{Y}{M}$.  Thus
$$J_M \ll \frac{1}{\sqrt{\theta N}} \frac{M}{Y}.
$$Then
$$S_M \ll q^{3/2+\epsilon} \sum_{A\leq n\leq A+B} \frac{|a(n)|}{n} \frac{\sqrt{M}}{\sqrt{\theta N}}\frac{M}{Y}
\ll N^\epsilon q^{3/2+\epsilon}\frac{\sqrt{M}}{\sqrt{\theta N}} \sqrt{M}\frac{M}{Y}\bfrac{B}{A}^p
$$where $A \asymp \frac{q^3}{N} M$ and $B\asymp \frac{q^3}{N} Y$.  Thus for $p=1$,
$$S_M \ll N^\epsilon q^{3/2+\epsilon}\frac{1}{\sqrt{\theta N}} \sqrt{M} \ll \frac{N^\epsilon}{\sqrt{\theta N}}q^{3/2+\epsilon} \sqrt{\con |\theta N|} \ll N^\epsilon \sqrt{\con} Q^{3/2+\epsilon},
$$which is bounded by (\ref{eqn:sumbound}).

For $p=1/2$, we get
$$S_M \ll N^\epsilon \frac{q^{3/2+\epsilon} \sqrt{M}}{\sqrt{|\theta N|}} \bfrac{M}{Y}^{1/2}.$$  Since $Y \gg \frac{M}{|\theta N|}$, this leads to 
\begin{equation}\label{eqn:SMunconditionalbound4}
S_M \ll q^{3/2+\epsilon} \sqrt{M}\ll Q^{\epsilon} \sqrt{Nq\con}
\end{equation}

\subsection{Conclusion}
From (\ref{eqn:sumbound}) and the sections above, we have that on Ramanujan,
$$\T_k \ll (QN)^{\epsilon} \left(Q^{3/2} \con^{1/2} + NQ^{-1/2}+ \bfrac{N}{Q}^{3/2}\right) \ll N^{3/4 + \epsilon} \con^{1/4},
$$upon setting $Q = \frac{N^{1/2}}{\con^{1/6}}$.

By (\ref{eqn:sumbound}),(\ref{eqn:SMunconditionalbound1}), (\ref{eqn:SMunconditionalbound2}),(\ref{eqn:SMunconditionalbound3}) and (\ref{eqn:SMunconditionalbound4}), we have that the unconditional bound has two extra terms so that for $Q = \frac{N^{1/2}}{\con^{1/6}}$,
$$\T_k \ll N^\epsilon Q^{\epsilon}\sqrt{Nq\con} + N^\epsilon Q^{3/2}\con^{2/3} + N^{3/4 + \epsilon} \con^{1/4} \ll N^{3/4 + \epsilon} \con^{5/12}.
$$

\paragraph{\bf Acknowledgements:} I would like to thank Xiaoqing Li for commenting on a preprint of this paper.

\end{document}